\title{Shattering Thresholds for Random Systems of Sets, Words, and Permutations}
\author {Anant P.~Godbole\\
Department of Mathematics and Statistics\\
East Tennessee State University\and Samantha Pinella\\
School of Mathematics\\
University of Edinburgh\and Yan Zhuang\\
Department of Mathematics\\
Brandeis University}
\begin{document}
\def\qed{\vbox{\hrule\hbox{\vrule\kern3pt\vbox{\kern6pt}\kern3pt\vrule}\hrule}}
\def\ms{\medskip}
\def\n{\noindent}
\def\ep{\varepsilon}
\def\G{\Gamma}
\def\D{\Delta}
\def\lr{\left(}
\def\ls{\left[}
\def\rs{\right]}
\def\lf{\lfloor}
\def\rf{\rfloor}
\def\lg{{\rm lg}}
\def\lc{\left\{}
\def\rc{\right\}}
\def\rr{\right)}
\def\ph{\varphi}
\def\p{\mathbb P}
\def\nk{n \choose k}
\def\a{\cal A}
\def\s{\cal S}
\def\e{\mathbb E}
\def\l{\lambda}
\def\va{\mathbb V}
\newcommand{\bsno}{\bigskip\noindent}
\newcommand{\msno}{\medskip\noindent}
\newcommand{\oM}{M}
\newcommand{\omni}{\omega(k,a)}
\newtheorem{thm}{Theorem}[section]
\newtheorem{con}{Conjecture}[section]
\newtheorem{claim}[thm]{Claim}
\newtheorem{deff}[thm]{Definition}
\newtheorem{lem}[thm]{Lemma}
\newtheorem{cor}[thm]{Corollary}
\newtheorem{rem}[thm]{Remark}
\newtheorem{prp}[thm]{Proposition}
\newtheorem{ex}[thm]{Example}
\newtheorem{eq}[thm]{equation}
\newtheorem{que}{Problem}[section]
\newtheorem{ques}[thm]{Question}
\providecommand{\floor}[1]{\left\lfloor#1\right\rfloor}
\maketitle
\begin{abstract}
This paper considers a problem that relates to the theories of covering arrays \cite{co}, permutation patterns \cite{kn}, Vapnik-\v Cervonenkis (VC) classes \cite{bl},\cite{du}, and probability thresholds \cite{as}.  Specifically, we want to find the number of  subsets of $[n]:=\{1,2,\ldots,n\}$ we need to randomly select, in a certain probability space, so as to respectively ``shatter" all $t$-subsets of $[n]$.  Moving from subsets to words, we ask for the number of $n$-letter words on a $q$-letter alphabet that are needed to shatter all $t$-subwords of the $q^n$ words of length $n$.  Finally, we explore the number of random permutations of $[n]$ needed to shatter (specializing to $t=3$), all length 3 permutation patterns in specified positions.  We uncover a very sharp zero-one probability threshold for the emergence of such shattering; Talagrand's isoperimetric inequality in product spaces \cite{as} is used as a key tool, along with the second moment method. 
\end{abstract}
\section{Introduction}  In this section, we give the necessary background on covering arrays, Vapnik-\v Cervonenkis classes, and permutation patterns, and then explain our goals.  A $k\times n$ array with entries from the alphabet $\{0,1,\ldots, q-1\}$ is said to be a $(t,q,n,k,\l)$-covering array, or briefly a $t$-covering array, if for each of the ${n\choose t}$ choices of $t$ columns, each of the $q^t$ $q$-ary words of length $t$ can be found at least $\l$ times among the rows of the selected columns.  Covering arrays are used as valuable tools in software testing; see, e.g., \cite{co}, which is a comprehensive survey of the theory of $t$-covering arrays.  In this paper, we will focus solely on the case $\l=1$.  If $q=2$, we can interpret any row as the characteristic vector of a subset of $[n]$ -- by making a correspondence between the positions where the row has ones, and the set of those positions.  We thus  have the following alternative formulation of covering arrays: A family ${\cal F}$ of subsets of $[n]$ is a $t$-covering array if for each $\{a_1,\ldots,a_t\}\subset[n]$,
\[\vert\{\{a_1,\ldots,a_t\}\cap F\}:F\in{\cal F}\vert= 2^t.\]
We next see how this definition relates to that of VC classes.

A  class ${\cal F}$ of subsets of an abstract set ${\cal Y}$ is said to {\it shatter} a subset $A=\{a_1,\ldots,a_t\}\subseteq {\cal Y}$ if
\[\vert\{A\cap F\}:F\in{\cal F}\vert= 2^t.\] Furthermore, the VC dimension ${\cal VC(F)}$ of ${\cal F}$ \cite{du} is the cardinality of the smallest {\it unshattered} set (the dimension is $\infty$ if all sets of all finite size are shattered.)  A class ${\cal F}$ is said to be a VC class if ${\cal VC(F)}<\infty$.  Many canonical examples of VC classes are driven by underlying geometric considerations.  For example, consider the {\it infinite} family ${\cal F}$ of subsets of ${\cal Y}={\mathbb R}$ of the form $(-\infty,x]:x\in{\mathbb R}$.  Every set of size 1 is clearly shattered by ${\cal F}$.  Next letting $t=2$, we consider the class of all 2-element subsets of ${\mathbb R}$.  It is then clear that
\[\vert\{\{a_1,a_2; a_1<a_2\}\cap F\}:F\in{\cal F}\vert= 3<2^2,\]
since it is impossible for $F\in{\cal F}$ to intersect the two element set $\{a_1,a_2\}$ in its larger element $a_2$.  It follows that ${\cal VC(F)}=2$.  To give another example, if ${\cal F}$ consists of all convex sets in ${\cal Y}={\mathbb R}^2$, then it is impossible for elements of ${\cal F}$ to ``shatter" a three element subset $A=\{a_1,a_2,a_3\}$ of collinear points, and thus
\[\vert\{A\cap F:F\in{\cal F}\}\vert= 7<2^3.\]  
Since every 2-element set can be shattered by convex sets, we have that ${\cal VC(F)}=3$ in this case.  

VC classes were first defined and used in the context of uniform limit theorems in statistics \cite{du}, \cite{vc}; later, their use was extended to learning theory \cite{bl}, \cite{va}. The alternative (and perhaps more popular) definition of the VC dimension of ${\cal F}$ is ``the cardinality of the largest shattered set".  In many cases, e.g., in the first example given above, the largest shattered set and the smallest unshattered set differ in size by 1; in general, however, this is not the case, as in the second example.  The reason why we use the first definition of the VC dimension is given later in this section.

The above discussion reveals that with ${\cal Y}=[n]$, n system ${\cal F}$ of finite subsets of $[n]$, arranged in a rectangular array,  is a binary $t$-covering array if and only if ${\cal VC(F)}\ge t+1$.  An explanation follows:  If ${\cal F}$ is $t$-covering, then for each set $A$ of size $t$ and each $B\subseteq A$, there exists $F\in{\cal F}$ such that $F\cap A=B$; thus every set of size $t$ is shattered, and the smallest unshattered set must be of size $t+1$ or more.  The reverse argument is valid too.

When $q\ge 3$, covering arrays are often described in terms of words.  We will use this terminology in this paper too, but the notions of shattering and VC dimension are probably best described using the language of multisets.  We will interpret a $k\times n$ array $\{a_{ij}\}_{1\le i\le k; 1\le j\le n}$, with entries from $\{0,1,\ldots,q-1\}$ as consisting of $k$ multisets, with the $i$th multiset containing the element $j$ $a_{ij}$ times, where the {\it degree} of the multiset, i.e., the maximum number of times an element may appear in it, is bounded by $q-1$.  The notion of the intersection of two multisets $A,B$ is defined in the natural way.  For example, $\{1,1,2,2,3\}\cap\{1,1,1,1,2,3,3\}=\{1,1,2,3\}$.  We say that a collection ${\cal F}$ of $k$ multisets shatters a multiset $A$ with $t$ distinct elements each repeated $q-1$ times, if 
\[\vert\{A\cap F:F\in{\cal F}\}\vert= q^t.\] As before the VC dimension ${\cal VC(F)}$ of ${\cal F}$ is the cardinality of the smallest {\it unshattered} multiset of the above type, with $q$ fixed and the minimum taken over $t$ (${\cal VC(F)}=\infty$ if there is no such smallest $t$).  We thus see that ${\cal F}$ is a $(t,q,n,k,1)$-covering array if and only if ${\cal VC(F)}\ge t+1$.

We next turn to permutations.  The theory of permutation patterns was initiated by Knuth \cite{kn}, and continues to be an area of active investigation.  We say that a  permutation $\pi\in S_n$ {\it contains} the permutation $\rho\in S_t$ if there exist indices $1\le i_1<i_2<\ldots<i_t\le n$ such that $(\pi_{i_1},\ldots,\pi_{i_t})$ and $(\rho_1,\ldots,\rho_t)$ are order isomorphic; if not we say that $\pi$ {\it avoids} $\rho$.  Enumeration questions are critical in this area.  For example it is known that for $t=3$, the number of $(i,j,k)$ avoiding $n$-permutations is given by the Catalan numbers ${{2n}\choose{n}}/(n+1)$ for each of the six choices of $i,j,k$ \cite{bo}.  Moreover, the Stanley-Wilf conjecture, namely that for fixed $\rho$, the number of $\rho$-avoiding $n$-permutations is asymptotic to $C^n$ for some $1\le C=C_\rho<\infty$, was recently proved by Marcus and Tardos \cite{mt}. How might shattering and VC dimension be defined in the case where ${\cal F}$ consists of an array of $k$ $n$-permutations $(\pi_1,\ldots,\pi_k)$?  Using the language of covering arrays, we shall say that the VC dimension is at least $t+1$ if for each choice of $t$ columns and $\rho\in S_t$, at least one row of the selected columns contains entries order isomorphic to those of $\rho$.  This is equivalent to saying that the $k$ permutations, restricted to any $t$ positions, shatter all the $t!$ permutations on those positions.

The {\it size} of a $t$-covering (or other) rectangular array will refer to the number of rows it contains, expressed as a function of the number of columns.  Research on $t$-covering arrays has focused on finding arrays of small size.  In \cite{sl}, for example, the case of $t=3$ is studied in detail, and Roux's result that there exist $3$-covering binary arrays of size $7.5\lg n$ is proved, where $\lg=\log_2$.  This result was re-proved in \cite{gss} using the Lov\'asz Local Lemma (see \cite{as}), where the underlying probability model consisted, as in the work of Roux, of independently placing an equal number of ones and zeros in each column.  This model is intractable for general values of $t$ and $q$; accordingly, the general upper bound on the size of covering arrays was proved in \cite{gss} by reverting to a simple multinomial model, where each spot in the $k\times n$ array is independently and uniformly chosen from the set $\{0,1,\ldots,q-1\}$.  However the Lov\'asz Lemma is an existence result whose conclusion is that there is a positive probability that there are no ``bad events,"  i.e., that 
\[k\ge K\Rightarrow\p({\rm array\ is\ }t-{\rm covering})>0,\]
so that a $t$-covering array with $K$ rows exists.  By contrast, in this paper we are looking for results, still in the $\log n$ domain, that are of the form
\[k\le k_0(n)\Rightarrow \p({\rm array\ is\ }t-{\rm covering})\to 0\enspace(n\to\infty),\]
\[k\ge k_1(n)\Rightarrow \p({\rm array\ is\ }t-{\rm covering})\to 1\enspace(n\to\infty),\]
and where the gap $[k_0(n),k_1(n)]$ is not too wide.  We will use the simple first moment method (linearity of expectation) together with Talagrand's isoperimetric inequalities, to establish such a result in Section 2.

The situation is a little more nuanced when we turn to the question of shattering permutations.  First of all, we are only able to prove clean results when $t=3$, but, more importantly, it is also meaningful to consider {\it large} arrays with small VC dimension.  For example, if we wrote each of the $\sim 4^n$ 123-avoiding $n$ permutations in a rectangular array, there would be no triple order isomorphic to 123 in {\it any} set of 3 columns, and the VC dimension of this array would be 2, using the ``largest shattered set" definition.  The relevant question would be to investigate how much better than that we could do while still maintaining the VC dimension.  This is the approach taken by Cibulka and Kyn\v{c}l \cite{ck}, who give {\it superexponential} bounds on the size of the extremal such array for $t=3$.  Our motivation, using the ``smallest unshattered set" definition of VC dimension, is as follows:  As with words, we want to investigate whether for every $t$, as $n$ tends to infinity, there is an interval of values of $[k_0,k_1]=[k_0(n),k_1(n)], k_i(n)=\Theta(\lg n), i=0,1,$ such that an array of $k\le k_0$ random permutations has VC dimension $\ge t+1$ with probability that tends to 0 as $n\to\infty$, and such that an array with more than $k_1$ rows has VC dimension  $\ge t+1$ almost surely.  Results along these lines are proved in Section 3, and it is their probabilistic nature that make the use of the ``smallest unshattered set" definition appropriate.  Consider $k$ of order $\lg n$.  Assuming for simplicity that $q=2$, for each $t$ we have many more rows than possible words, and a simple ``balls in boxes" argument reveals that for some choice of $t$ columns, each binary word will be present in some row of the selected columns with high probability.  There will thus be many sets of large size $t$ that {\it are shattered} and the ``largest shattered set" VC dimension would be quite large.  The point is that there will be a few unshattered sets of small size, and it is those that we wish to understand.

As pointed out by one of the referees of this paper, 
sets of permutations with VC dimension $t+1$ are exactly the $t$-scrambling
permutations from the papers \cite{spencer}, \cite{furedi}, \cite{radhakrishnan} and \cite {tarui}.  In fact, Spencer's paper \cite{spencer} uses the same method as in Theorem 3.1 to give a general upper bound on the size of $t$-scrambling permutations, and the other papers focus on improving upper and lower bounds for the minimum size of scrambling permutations.

\section{Shattering Subsets and Words}  We use the following model. Let ${\cal F}$ be a randomly generated rectangular array of
$k$ words, each of length $n$ and obtained by selecting each position in the $k\times n$ array to independently and uniformly be one of the letters of the  ``alphabet" $\{0,1,\ldots, q-1\}$.  Denote the words in ${\cal F}$ as $F_{1},\ldots,F_{k}$. 
As noted in Section 1, if $q=2$, then ${\cal F}$ is simply a {\it random system} of $k$ subsets
of $[n]$. We will use \textit{rows} to refer to the words in ${\cal F}$ and
\textit{columns} to refer to the character positions.  
In this section, we show that the threshold, under our model, for the property
``${\cal F}$ shatters all $t$-words'' (which is an alternative term we use for multiset shattering) occurs at the level $\frac{t}{\lg\left({q^{t}}/({q^{t}-1})\right)}\lg\left(n\right)$; this will allow us to determine with high probability
the VC dimension of a random word array.
Deriving an upper threshold
is easy:
\begin{thm}
Let $q\ge 2$, $n\ge t\ge 1$, $k\ge\frac{t\lg n}{\lg\left({q^{t}}/({q^{t}-1})\right)}(1+o(1))$,
and let ${\cal F}$ be a randomly generated array of $k$ words.  Then all $t$-words are shattered almost surely by ${\cal F}$, i.e., the probability that ${\cal F}$ is a covering array tends to 1 as $n\to\infty$.\end{thm}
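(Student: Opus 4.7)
The plan is to apply the first moment method (a union bound over bad events). For each of the $\binom{n}{t}$ choices of columns $C=\{c_1,\ldots,c_t\}\subseteq[n]$ and each of the $q^t$ target words $w\in\{0,1,\ldots,q-1\}^t$, let $B_{C,w}$ be the event that no row of ${\cal F}$, restricted to the positions in $C$, equals $w$. The array ${\cal F}$ is a $t$-covering array (equivalently, shatters every $t$-word) iff none of the $B_{C,w}$ occurs, so it suffices to show
$$\sum_{C,\,w}\p(B_{C,w})\longrightarrow 0 \quad (n\to\infty).$$

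Since the entries of ${\cal F}$ are independent and uniformly distributed on the alphabet, a fixed row agrees with $w$ on $C$ with probability $q^{-t}$, and the rows are independent, so
$$\p(B_{C,w})=\left(1-\frac{1}{q^t}\right)^k=\left(\frac{q^t-1}{q^t}\right)^k.$$
The union bound then yields
$$\p({\cal F}\text{ is not }t\text{-covering})\le \binom{n}{t}\,q^t\left(\frac{q^t-1}{q^t}\right)^k.$$

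Taking $\lg$ of the right-hand side gives $t\lg n+t\lg q+O(1)-k\lg\!\bigl(q^t/(q^t-1)\bigr)$, which tends to $-\infty$ as soon as $k$ exceeds $t\lg n/\lg\!\bigl(q^t/(q^t-1)\bigr)$ by any term that dominates the $O(1)$ additive slack coming from $t\lg q$ and $\lg\binom{n}{t}-t\lg n$. The hypothesis $k\ge\frac{t\lg n}{\lg(q^t/(q^t-1))}(1+o(1))$ supplies precisely this slack via the $o(1)$ factor, so the failure probability vanishes and the theorem follows.

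I do not anticipate any serious obstacle: the argument is a routine first moment calculation, and the only bookkeeping step is to verify that the constant-in-$n$ contribution $q^t$ together with the polynomial $\binom{n}{t}\le n^t$ factor can be absorbed into the $(1+o(1))$ allowance on $k$. The more delicate matching lower threshold, which presumably requires variance control and Talagrand's inequality as advertised in the abstract, is what I would expect to be handled separately later in the section.
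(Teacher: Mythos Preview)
Your argument is correct and is essentially identical to the paper's own proof: both apply the first moment method/union bound to obtain $\p(X\ge 1)\le\binom{n}{t}q^t\bigl(\tfrac{q^t-1}{q^t}\bigr)^k$ and then check that this tends to $0$ under the stated hypothesis on $k$. The paper phrases the bound via Markov's inequality on the count $X$ of defective column $t$-sets rather than a direct union over pairs $(C,w)$, but the resulting inequality and the verification of the threshold are the same.
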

\begin{proof}
Let $X$ be the number of sets of $t$ columns corresponding to unshattered
$t$-words. By Markov's inequality $\p(X\ge a)\le \e(X)/a$ [valid  for non-negative random variables $X$], and linearity of expectation, we have:
\[
\p\left(X\geq1\right)\leq\e\left(X\right)\leq{n \choose t}q^{t}\left(\frac{q^{t}-1}{q^{t}}\right)^{k}\leq\frac{n^{t}}{t!}q^{t}\left(\frac{q^{t}-1}{q^{t}}\right)^{k}\to0
\]
provided that 
\[{k\ge\frac{t\lg n+\omega_n(1)-\lg t!+t\lg q}{\lg\left({q^{t}}/({q^{t}-1})\right)}=\frac{t\lg n}{\lg\left({q^{t}}/({q^{t}-1})\right)}(1+o(1)):=k_1(n),}\]
where we have used the standard notation $\omega_n(1)$ for a function growing to infinity arbitrarily slowly.  This proves the result.\hfill
\end{proof}

Proving that the lower threshold function $k_0(n)$ is of the same magnitude is tantamount to showing that the random variable $X$ is sharply concentrated around its mean. In some sense this was done in \cite{gss}, but using a na\"ive (and ultimately incorrect) probability model that was only shown to be valid for $t=3$.  To give a more rigorous proof in this paper, we shall apply Talagrand's inequality in the form found in \cite{as}.
This inequality is applicable for random variables that
are \textit{1-Lipschitz}:
\begin{deff}
Let $Z$ be a random variable expressed as a function of $N$ independent
variables $\{Z_{i}\}_{i=1}^N$. We call $Z$ \textit{1-Lipschitz} if $$\left|Z\left(Z_{1},\ldots,Z_{N}\right)-Z\left(Z_{1}^{*},\ldots,Z_{N}^{*}\right)\right|\leq1$$
whenever $Z_{i}\ne Z_{i}^{*}$ for at most one $i$.
\end{deff}
The random variable $X$, counting the number of ``defective" $t$-tuples of columns (i.e. those sets
corresponding to unshattered $t$-words), depends on $nk$ mutually independent random variables. It is not, however 1-Lipschitz since an added
presence or absence of a specific character may change $X$ by more
than 1 due to overlapping columns. However, if we define $Y$ as the maximum number of {\it non-overlapping} sets
of ``defective" columns, then $Y$ is 1-Lipschitz. 

Talagrand's inequality also involves the notion of a {\it certification
function}:
\begin{deff}
Let $Z$ be a random variable expressed as a function of $N$ independent
variables $Z_{i}$, and let $f:\mathbb{N\rightarrow\mathbb{N}}$ be
a function. We call \textit{$Z$ $f$-certifiable} if for every $s\ge 1$, $Z\ge s$ can
always be verified to be true by some $f(s)$-tuple  of the $N$ independent random
variables. In this case, we call $f$ a \textit{certification function}
for $Z$.
\end{deff}
Given the random variable $Y$ as above, to verify that there are at least $s$ non-overlapping
sets of unshattered $k$-words, it is easy to see that it suffices to know $kts$ of the entries in the array.  Thus $f$ is linear and $f(s)=kts$.

Talagrand's inequality is reproduced below for completeness:
\begin{thm}[\textbf{Talagrand's Inequality}]
\textbf{\textup{}}Let $Z$ be a 1-Lipschitz random variable with certification
function $f$. Then, for all $m,u>0$:

\[
\p(Z\leq m-u\sqrt{f\left(m\right)})\p\left(Z\geq m\right)\leq e^{-\frac{u^{2}}{4}}
\]

\end{thm}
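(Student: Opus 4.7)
The plan is to derive the stated inequality from Talagrand's convex distance inequality on product probability spaces, which I would take as a black box. Recall that for a measurable set $A$ in a product space $\Omega=\Omega_1\times\cdots\times\Omega_N$, the Talagrand convex distance from $x$ to $A$ is
\[d_T(x,A) = \sup_{\alpha:\,\|\alpha\|_2=1}\inf_{y\in A}\sum_{i:\,x_i\neq y_i}\alpha_i,\]
and Talagrand's isoperimetric result asserts $\p(A)\cdot\p\bigl(d_T(\cdot,A)\ge u\bigr)\le e^{-u^2/4}$. All of the work left for this version is to translate ``$Z$ is 1-Lipschitz and $f$-certifiable'' into a lower bound on $d_T$.

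Set $A=\{Z\le m-u\sqrt{f(m)}\}$. It will suffice to show that whenever $Z(x)\ge m$ we have $d_T(x,A)\ge u$, since then $\{Z\ge m\}\subseteq\{d_T(\cdot,A)\ge u\}$, and plugging into the convex distance inequality immediately yields $\p(Z\le m-u\sqrt{f(m)})\cdot\p(Z\ge m)\le e^{-u^2/4}$, the desired bound.

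To prove the implication, fix $x$ with $Z(x)\ge m$. Using the certifiability hypothesis, select a witness set $I\subseteq\{1,\ldots,N\}$ with $|I|\le f(m)$ such that every $z$ agreeing with $x$ on $I$ automatically satisfies $Z(z)\ge m$. Take the unit vector $\alpha_i=\mathbf{1}_{i\in I}/\sqrt{|I|}$ as the test vector in the definition of $d_T$. For any $y\in A$, let $k=|\{i\in I:\,x_i\neq y_i\}|$; modifying $y$ on exactly those $k$ coordinates to match $x$ produces a $y'$ that agrees with $x$ on $I$, so certification gives $Z(y')\ge m$, while the 1-Lipschitz property gives $|Z(y)-Z(y')|\le k$. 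Combined with $Z(y)\le m-u\sqrt{f(m)}$, this forces $k\ge u\sqrt{f(m)}$. Hence $\sum_{i:\,x_i\neq y_i}\alpha_i\ge k/\sqrt{|I|}\ge u$, and taking the infimum over $y\in A$ yields $d_T(x,A)\ge u$.

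The main difficulty lies not in the deduction above --- which is mostly choreography: aligning the certification set $I$ with the Lipschitz bound, and choosing a weight vector uniform on $I$ so that the $\sqrt{f(m)}$ scale emerges naturally --- but in the underlying convex distance inequality itself, whose proof requires a subtle induction on $N$ combined with H\"older-type estimates. Since Theorem 2.2 is quoted ``for completeness'' from \cite{as}, my proposal is to invoke the convex distance inequality as a lemma rather than reprove it from scratch.
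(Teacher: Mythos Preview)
Your derivation is correct and is in fact the standard route from Talagrand's convex distance inequality to the ``certifiable Lipschitz'' form of the inequality; the choice of the uniform weight vector on the witness set $I$ and the counting argument $k\ge u\sqrt{f(m)}$ are exactly what appears in Alon--Spencer \cite{as}. Note, however, that the paper does not actually prove Theorem~2.4: it is merely quoted from \cite{as} ``for completeness'' and used as a black box, so there is no in-paper proof to compare against. Your proposal therefore goes beyond what the paper does, supplying the deduction that the paper only cites.
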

Applying Talagrand's inequality to the variable $Y$ with $m={\rm median}(Y)$ (so that $\p(Y\ge m)\ge 1/2$) and $u={\sqrt{\frac{m}{kt}}}$, we see that
\begin{equation}
\p(Y=0)\le 2e^{-\frac{m}{4kt}}.\end{equation}  We will use (1) in an appropriate way to get the lower threshold; specifically, we need to derive conditions under which (i) $\e(Y)$ and $\e(X)$ are close; and (ii) $\e(Y)$ and $m$ are close. A series of technical lemmas that lead to (i) and (ii) are presented next.
\begin{lem}
Let $\Gamma$ and $\Delta$ be distinct non-disjoint sets of $t$ columns.
Let $r$ be the number of overlapping elements of $\Gamma$ and $\Delta$;
i.e., $r=\left|\Gamma\cap\Delta\right|$. Define the indicator random
variable $I_{\Gamma}$ as being $1$ if $\Gamma$ is missing at least one $t$-word
and 0 otherwise. Then

\[
\p\left(I_{\Gamma}I_{\Delta}=1\right)\leq q^{2t-r}\left(\frac{q^{t}+q^{r-t}-2}{q^{t}}\right)^{k}\left\{ 1+o\left(1\right)\right\}\enspace(k\to\infty). 
\]
\end{lem}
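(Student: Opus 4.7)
The plan is to upper bound $\p(I_\G I_\D = 1)$ by a union bound over ordered pairs $(w_\G,w_\D)$ of $t$-words, where $w_\G$ is a candidate for a word missing from the restriction of ${\cal F}$ to $\G$ and similarly for $w_\D$. Since $I_\G = 1$ exactly when some $t$-word is missing from the $\G$-columns,
\[
\p(I_\G I_\D = 1) \le \sum_{w_\G}\sum_{w_\D} \p\bigl(w_\G \text{ missing on } \G \text{ and } w_\D \text{ missing on } \D\bigr).
\]
By independence of the $k$ rows, each summand equals $p^k$, where $p$ is the probability that a single row avoids matching $w_\G$ on $\G$ and simultaneously avoids matching $w_\D$ on $\D$; by inclusion--exclusion, $p = 1 - 2q^{-t} + \pi$ with $\pi$ the probability that the row matches both words.

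I would then split the pairs by whether $w_\G$ and $w_\D$ agree on the $r$ shared coordinates of $\G \cap \D$. If they agree, matching both constrains the row on $|\G \cup \D| = 2t - r$ positions, so $\pi = q^{r-2t}$ and $p = (q^t + q^{r-t} - 2)/q^t$; a free choice of the prescribed values on $\G \cup \D$ gives exactly $q^{2t-r}$ such pairs. If $w_\G$ and $w_\D$ disagree somewhere on $\G \cap \D$, then $\pi = 0$, $p = (q^t - 2)/q^t$, and there are $q^{2t} - q^{2t-r}$ such pairs. Combining,
\[
\p(I_\G I_\D = 1) \le q^{2t-r}\left(\frac{q^t + q^{r-t} - 2}{q^t}\right)^k + (q^{2t} - q^{2t-r})\left(\frac{q^t - 2}{q^t}\right)^k.
\]

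To finish, I would show the second term is $o(1)$ times the first. Factoring out the first exposes the ratio $(q^r - 1)\bigl((q^t - 2)/(q^t + q^{r-t} - 2)\bigr)^k$; since $\G$ and $\D$ are non-disjoint, $r \ge 1$, so $q^{r-t} > 0$, the geometric base is strictly less than $1$, and the ratio tends to $0$ exponentially in $k$. This yields the claimed bound with a $1 + o(1)$ factor. The only delicate step is the pair-counting split by agreement on $\G \cap \D$ and the corresponding per-row probability computation; once that is in place, independence of the $k$ rows together with routine inclusion--exclusion produces the formula.
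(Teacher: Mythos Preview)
Your proof is correct and takes a genuinely different—and considerably shorter—route than the paper. The paper conditions on $I_\Gamma=1$, then further on the number $m$ of words missing from $\Gamma$; the $m\ge 2$ contribution is dispatched as lower order, and for $m=1$ the paper realizes the conditional distribution of the rows of $\Gamma$ given that exactly one specified word is absent, then computes $\p(I_\Delta=1\mid A_1)$ by summing via the binomial theorem over the random number of rows whose overlap characters match those of the missing word, obtaining two quantities $p_1,p_2$ that require a separate monotonicity argument to clean up. Your direct union bound over ordered pairs $(w_\Gamma,w_\Delta)$, combined with the per-row inclusion--exclusion $p=1-2q^{-t}+\pi$, bypasses all of this conditioning; the split into pairs that agree versus disagree on $\Gamma\cap\Delta$ captures in one step exactly the dichotomy the paper reaches only after the conditional analysis. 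What your approach buys is brevity and transparency; what the paper's conditioning might buy in other settings is finer control when a raw pair union bound is too lossy, but here both routes land on the identical leading term $q^{2t-r}\bigl((q^t+q^{r-t}-2)/q^t\bigr)^k$ and the same $o(1)$ correction governed by $\bigl((q^t-2)/(q^t+q^{r-t}-2)\bigr)^k$.
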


\begin{proof}
Lemma 2.5 generalizes a result in \cite{gss} and this proof is similar. Let $A_{m}$ be the event that exactly $m$ words are
missing from $\Gamma$. We have

\begin{align}
\p\left(I_{\Gamma}I_{\Delta}=1\right) &=\p\left(I_{\Gamma}=1\right)\p\left(I_{\Delta}=1|I_{\Gamma}=1\right)\nonumber\\
 & =\p\left(I_{\Gamma}=1\right)\p\left(I_{\Delta}=1|A_{1}\cup A_{2}\cup\dotsb\cup A_{q^{t}-1}\right)\nonumber\\
 & =\p\left(I_{\Gamma}=1\right)\left[\frac{\p\left(I_{\Delta}=1\cap A_{1}\right)+\dotsb+\p\left(I_{\Delta}=1\cap A_{q^{t}-1}\right)}{\p\left(A_{1}\cup A_{2}\cup\dotsc\cup A_{q^{t}-1})\right)}\right]\nonumber\\
 & \leq\p\left(I_{\Gamma}=1\right)\left[\p\left(I_{\Delta}=1|A_{1}\right)+\left(\frac{\p\left(A_{2}\right)+\dotsb+\p\left(A_{q^{t}-1}\right)}{\p\left(A_{1}\cup\ldots\cup A_{q^t-1}\right)}\right)\right]\nonumber\\
 & \leq\p\left(I_{\Gamma}=1\right)\left[\p\left(I_{\Delta}=1|A_{1}\right)+\frac{\binom{q^{t}}{2}\left(\frac{q^{t}-2}{q^{t}}\right)^{k}}{q^{t}\left(\frac{q^{t}-1}{q^{t}}\right)^{k}-\binom{q^{t}}{2}\left(\frac{q^{t}-2}{q^{t}}\right)^{k}}\right]\nonumber\\
 & =\p\left(I_{\Gamma}=1\right)\cdot\nonumber\\
&\qquad\left[\p\left(I_{\Delta}=1|A_{1}\right)+\frac{\binom{q^{t}}{2}\left(\frac{q^{t}-2}{q^{t}}\right)^{k}}{q^{t}\left(\frac{q^{t}-1}{q^{t}}\right)^{k}\left(1-\left(\frac{q^{t}-1}{2}\right)\left(\frac{q^{t}-2}{q^{t}-1}\right)^{k}\right)}\right]\nonumber\\
 & =\p\left(I_{\Gamma}=1\right)\cdot\nonumber\\
&\qquad\left[\p\left(I_{\Delta}=1|A_{1}\right)+\left(\frac{q^{t}-1}{2}\right)\left(\frac{q^{t}-2}{q^{t}-1}\right)^{k}\left\{ 1+o\left(1\right)\right\} \right].
\end{align}
Since $\p\left(I_{\Gamma}=1\right)\leq q^{t}\left(1-q^{-t}\right)^{k},$
the problem reduces to upper-bounding $\p\left(I_{\Delta}=1|A_{1}\right)$.
Exactly one word is missing in $\Gamma$; let us denote that word
by $\gamma$. Assume, without loss of generality, that the first $r$ columns of $\Delta$ are the same as the last $r$ columns of $\Gamma$.  We consider two cases. Let $p_{1}$ be the conditional probability
that a word beginning with the last $r$ characters of $\gamma$ is
also missing in $\Delta$; there are $q^{t-r}$ such words. Let $p_{2}$
be the probability that a word not beginning with these same $r$
characters is missing in $\Delta$; there are $q^{t}-q^{t-r}$ such
words. Hence

\[
\p\left(I_{\Delta}=1|A_{1}\right)\le q^{t-r}p_{1}+\left(q^{t}-q^{t-r}\right)p_{2}.
\]

We first calculate $p_{1}$. We know that $\gamma$ is the only word
missing in $\Gamma$, so for each of the remaining $q^{t}-1$ words in this first category,
there is at least one   row in $\Gamma$ containing that word. Take away one
such row for each of these $q^{t}-1$ words; each of the other rows
are randomly assigned to one of these $q^{t}-1$ words with probability
$\frac{1}{q^{t}-1}$ each. This process enables one to realize the probability distribution of the content of the rows of $\Gamma$ given that $A_1$ has occurred.  Let $\mathcal{A}$ be the number of rows
in $\Delta$ that coincide with those of $\Gamma$ in the overlapping $r$ positions; note that 
$\mathcal{A}$ is at least $q^{t-r}-1$. Then for $a\ge0$,

\[
\p(\mathcal{A}=a+q^{t-r}-1)=\binom{k-\left(q^{t}-1\right)}{a}\left(\frac{q^{t-r}-1}{q^{t}-1}\right)^{a}\left(\frac{q^{t}-q^{t-r}}{q^{t}-1}\right)^{k-\left(q^{t}-1\right)-a}.
\]
Using the binomial theorem
we obtain

\begin{align*}
p_{1} & =\sum_{a=0}^{k-\left(q^{t}-1\right)}\binom{k-\left(q^{t}-1\right)}{a}\left(\frac{q^{t-r}-1}{q^{t}-1}\right)^{a}\left(\frac{q^{t}-q^{t-r}}{q^{t}-1}\right)^{k-\left(q^{t}-1\right)-a}\cdot\\
&{}\qquad\qquad\left(1-\frac{1}{q^{t-r}}\right)^{a+q^{t-r}-1}\\
 & =\left(\frac{\left(q^{t-r}-1\right)^{2}}{q^{t-r}\left(q^{t}-1\right)}+\frac{q^{t}-q^{t-r}}{q^{t}-1}\right)^{k-\left(q^{t}-1\right)}\left(\frac{q^{t-r}-1}{q^{t-r}}\right)^{q^{t-r}-1}\\
 & =\left(1-\frac{1-q^{r-t}}{q^{t}-1}\right)^{k}\left(1-\frac{1-q^{r-t}}{q^{t}-1}\right)^{-\left(q^{t}-1\right)}\left(1-\frac{1}{q^{t-r}}\right)^{q^{t-r}-1}\\
 & \le \left(1-\frac{1-q^{r-t}}{q^{t}-1}\right)^{k},
\end{align*}
where the last inequality is valid since
\[\left(1-\frac{1}{q^{t-r}}\right)\le\left(1-\frac{1}{q^t-1}+\frac{1}{q^{t-r}(q^t-1)}\right)^{\frac{q^t-1}{q^{t-r}-1}},\]
which follows from the fact that the function $(1-kx)^{1/x}$ is monotone decreasing on the interval $[0,1]$ for fixed $k\in(0,1)$.

Repeating the process for $p_{2}$, let $\mathcal{B}$ be the number
of rows in $\Delta$ that do not begin with the last $r$ characters
of $\gamma$ in some fixed fashion; $\mathcal{B}$ is at least $q^{t-r}$. Then,

\[
\p(\mathcal{B}=b+q^{t-r})=\binom{k-\left(q^{t}-1\right)}{b}\left(\frac{q^{t-r}}{q^{t}-1}\right)^{b}\left(\frac{q^{t}-1-q^{t-r}}{q^{t}-1}\right)^{k-(q^{t}-1)-b},
\]
and by the same reasoning as before,

\begin{align*}
p_{2} & =\sum_{b=0}^{k-\left(q^{t}-1\right)}\binom{k-\left(q^{t}-1\right)}{b}\left(\frac{q^{t-r}}{q^{t}-1}\right)^{b}\left(\frac{q^{t}-1-q^{t-r}}{q^{t}-1}\right)^{k-\left(q^{t}-1\right)-b}\cdot\\
&\qquad\qquad\left(1-\frac{1}{q^{t-r}}\right)^{b+q^{t-r}}\\
 & =\left(1-\frac{1}{q^{t}-1}\right)^{k}\left(1-\frac{1}{q^{t}-1}\right)^{-\left(q^{t}-1\right)}\left(1-\frac{1}{q^{t-r}}\right)^{q^{t-r}}\\
 & \le\left(1-\frac{1}{q^{t}-1}\right)^{k}.
\end{align*}
Therefore,

\begin{align}
\p\left(I_{\Delta}=1|A_{1}\right) & \le q^{t-r}p_{1}+\left(q^{t}-q^{t-r}\right)p_{2}\nonumber\\
 & \le q^{t-r}\left(1-\frac{1-q^{r-t}}{q^{t}-1}\right)^{k}+\left(q^{t}-q^{t-r}\right)\left(1-\frac{1}{q^{t}-1}\right)^{k}\nonumber\\
 & =q^{t-r}\left(1-\frac{1-q^{r-t}}{q^{t}-1}\right)^{k}\left(1+(q^{r}-1)\left(\frac{q^{t}-2}{q^{r-t}+q^{t}-2}\right)^{k}\right)\nonumber\\
 & =q^{t-r}\left(1-\frac{1-q^{r-t}}{q^{t}-1}\right)^{k}\left\{ 1+o\left(1\right)\right\}\enspace(k\to\infty) ,
\end{align}
and thus by (2) and (3)
\begin{align*}
\p\left(I_{\Gamma}I_{\Delta}=1\right) & \leq\p\left(I_{\Gamma}=1\right)\left[\p\left(I_{\Delta}=1|A_{1}\right)+\left(\frac{q^{t}-1}{2}\right)\left(\frac{q^{t}-2}{q^{t}-1}\right)^{k}\left\{ 1+o\left(1\right)\right\} \right]\\
 & \leq q^{t}\left(1-q^{-t}\right)^{k}\cdot\\
&\qquad\left[q^{t-r}\left(1-\frac{1-q^{r-t}}{q^{t}-1}\right)^{k} +\left(\frac{q^{t}-1}{2}\right)\left(\frac{q^{t}-2}{q^{t}-1}\right)^{k} \right]\{1+o(1)\}\\
 & =\left(q^{2t-r}\left(\frac{q^{t}+q^{r-t}-2}{q^{t}}\right)^{k}+\frac{q^{t}\left(q^{t}-1\right)}{2}\left(\frac{q^{t}-2}{q^{t}}\right)^{k}\right)\left\{ 1+o\left(1\right)\right\} \\
 & =q^{2t-r}\left(\frac{q^{t}+q^{r-t}-2}{q^{t}}\right)^{k}\cdot\\
&\qquad\left(1+\frac{q^{t}-1}{2q^{t-r}}\left(\frac{q^{t}-2}{q^{t}+q^{r-t}-2}\right)^{k}\right)\left\{ 1+o\left(1\right)\right\} \\
 & =q^{2t-r}\left(\frac{q^{t}+q^{r-t}-2}{q^{t}}\right)^{k}\left\{ 1+o\left(1\right)\right\}\enspace(k\to\infty) .
\end{align*}
This proves Lemma 2.5, our main correlation bound.\hfill\end{proof}

Continuing the quest for a lower threshold, we compare the means of $X$, the variable of interest, and $Y$, the maximum number of disjoint collections of unshattered $t$-words.  Denoting the number of overlapping {\it pairs } of unshattered $t$-words by $Z$, we have that
\[Y\le X\le Y+Z,\] so that 
\[\e(X)\le\e(Y)+\e(Z).\]
Now, Fact 10.1 in \cite{mr} is as follows:
\begin{lem}
Let $m$ denote the median of the 1-Lipschitz random variable $Y=Y(Y_1,\ldots,Y_N)$, where the $Y_i$s are independent, and where $Y$ is certifiable using the certification function $f(s)=rs$.  Then
\[
\left|\e\left(Y\right)-m\right|\leq40\sqrt{r\e\left(Y\right)},
\]\end{lem}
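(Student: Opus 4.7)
My plan is to derive the bound from Talagrand's inequality (Theorem~2.4) by turning it into two-sided tail estimates for $|Y-m|$ and then integrating.  The only hypotheses available are that $Y$ is $1$-Lipschitz, that it is certifiable with linear certification function $f(s)=rs$, and that $m$ is the median of $Y$.  I would first produce a lower-tail bound by plugging $u = s/\sqrt{rm}$ into Theorem~2.4: since $\p(Y\ge m)\ge 1/2$ by definition of the median, this gives
\[
\p(Y \le m - s) \;\le\; 2\,e^{-s^{2}/(4rm)}\qquad(0\le s\le m).
\]

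For the upper tail the trick is to re-apply Talagrand at a higher target level $a=m+s$: choose $u$ so that $a-u\sqrt{ra}=m$, giving $u = s/\sqrt{r(m+s)}$, and use $\p(Y\le m)\ge 1/2$ to obtain
\[
\p(Y \ge m+s) \;\le\; 2\,e^{-s^{2}/(4r(m+s))}\qquad(s\ge 0).
\]
I would then write $|\e(Y)-m| \le \e|Y-m| = \int_{0}^{\infty}\p(|Y-m|>s)\,ds$.  The lower-tail piece contributes at most $\int_{0}^{\infty} 2e^{-s^{2}/(4rm)}\,ds = 2\sqrt{\pi rm}$.  The upper-tail integral is fussier because of the $m+s$ in the exponent; splitting it at $s=m$ and using $m+s \le 2m$ on $[0,m]$ and $m+s \le 2s$ on $[m,\infty)$ reduces the tail to a Gaussian integral on the first piece and a pure exponential $\int 2e^{-s/(8r)}\,ds$ on the second.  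Combining, I would obtain an inequality of the form $|\e(Y)-m| \le C_{1}\sqrt{rm} + C_{2}r$ with explicit small constants $C_1,C_2$.

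Finally, to pass from $m$ to $\e(Y)$ on the right-hand side: if $m \le \e(Y)$ the bound is immediate, and if $m > \e(Y)$ I would feed the inequality $m \le \e(Y) + C_{1}\sqrt{rm} + C_{2}r$ back into itself.  Viewed as a quadratic in $\sqrt{m}$, this yields $\sqrt{m}\le \sqrt{\e(Y)+C_{2}r}+C_{1}\sqrt{r}$, hence $\sqrt{rm}\le \sqrt{r\,\e(Y)} + C_{3}r$; the remaining $O(r)$ term can be absorbed into the multiplicative constant on the right to recover the stated form $40\sqrt{r\,\e(Y)}$.  The main obstacle is bookkeeping rather than conceptual: the re-application of Talagrand for the upper tail and the bootstrapping step both leak constants, so arriving at a clean numerical bound (with the generous value $40$) is a matter of careful arithmetic rather than new ideas.
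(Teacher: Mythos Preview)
The paper does not prove this lemma: it is quoted as Fact~10.1 from Molloy and Reed~\cite{mr} and used as a black box.  Your sketch is precisely the standard derivation that appears there---two-sided tail bounds obtained from Talagrand's inequality (Theorem~2.4) at the levels $m$ and $m+s$, followed by integration of $\p(|Y-m|>s)$---so there is no ``paper's own proof'' to compare against, and your outline is the right one.

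One point deserves a word of caution.  Your final step, absorbing the leftover additive $O(r)$ term into the form $40\sqrt{r\,\e(Y)}$, is not automatic: when $\e(Y)$ is small relative to $r$ one cannot bound $r$ by a constant multiple of $\sqrt{r\,\e(Y)}$, and your bootstrap in $\sqrt{m}$ does not eliminate this term.  The usual repair is to split off that regime directly: since $Y\ge 0$, Markov's inequality forces $m\le 2\,\e(Y)$ and hence $|\e(Y)-m|\le \e(Y)$, which already gives the stated bound whenever $\e(Y)\le 1600\,r$.  Your integration-plus-bootstrap argument then covers the complementary range $\e(Y)>1600\,r$, where indeed $r\le \sqrt{r\,\e(Y)}/40$ and the absorption is legitimate.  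This is squarely within the ``careful arithmetic'' you anticipated, but it is a genuine step that has to be supplied rather than waved through.
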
 
\noindent and so, setting $r=kt$, (1) yields
\begin{eqnarray}
\p(X=0)=\p(Y=0)&\le&2e^{-\frac{m}{4kt}}\nonumber\\
&\le&2e^{-\frac{1}{4kt}\{\e(Y)-40{\sqrt{kt\e(Y)}}\}}\nonumber\\
&\le&2e^{-\frac{1}{4kt}\{\e(X)-\e(Z)-40{\sqrt{kt\e(X)}}\}}.
\end{eqnarray}

The key issue is thus to find conditions under which $\e(Z)\to0$.    By Lemma 2.5, 
\begin{eqnarray}
\e(Z)&=&\sum_{\Gamma\cap\Delta\ne\emptyset}\p(I_\Gamma I_\Delta=1)\nonumber\\
&\le&\sum_{j=1}^{n \choose t}\sum_{r=1}^{t-1}{t\choose r}{{n-t}\choose{t-r}}q^{2t-r}\lr\frac{q^t+q^{r-t}-2}{q^t}\rr^k(1+o(1))\nonumber\\
&\le&K\sum_{r=1}^{t-1}n^{2t-r}\lr\frac{q^t+q^{r-t}-2}{q^t}\rr^k
\end{eqnarray}
for some constant $K=K_{t,q}$.
The $r$th term in (5) tends to zero provided that 
\begin{equation}k\ge\frac{(2t-r)\lg n+\omega_n(1)}{{\lg\left({q^{t}}/({q^{t}+q^{r-t}-2})\right)}},\end{equation}with $\omega_n(1)\to\infty$ being arbitrary.  The next two lemmas enable us to determine when (6) holds for all $r$.
\begin{lem}
The function $f:[2,t-1]\rightarrow\mathbb{R}$ defined by $f(r)=\frac{q^{r-1}-1}{r-1}, q\ge 2$
is monotonically increasing.\end{lem}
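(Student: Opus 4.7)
The plan is to prove monotonicity directly at consecutive integer values, i.e., to show that $f(r+1) > f(r)$ for every integer $r$ with $2 \le r \le t-2$. By definition this is
\[\frac{q^r - 1}{r} > \frac{q^{r-1} - 1}{r-1}.\]
Since $r-1 \ge 1$ and $r \ge 2$ are both positive, I would cross-multiply to obtain the equivalent inequality $(r-1)(q^r - 1) > r(q^{r-1} - 1)$, and after expanding and collecting terms reduce it to
\[r q^{r-1}(q - 1) > q^r - 1.\]

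The key observation is then to recognize the right-hand side as a geometric sum: $q^r - 1 = (q-1)(1 + q + q^2 + \cdots + q^{r-1})$. Dividing both sides by $q - 1 \ge 1$ (strictly positive because $q \ge 2$), it suffices to verify
\[r \cdot q^{r-1} > 1 + q + q^2 + \cdots + q^{r-1}.\]
The right-hand side consists of exactly $r$ terms, each bounded by $q^{r-1}$, and the first $r-1$ of them are strictly smaller than $q^{r-1}$ (using $q \ge 2$ and $r \ge 2$), so the inequality is strict term-by-term.

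As an alternative, one can adopt a calculus viewpoint: writing $s = r - 1$ and $g(s) = (q^s - 1)/s$, the value $g(s)$ is the slope of the secant line joining $(0,1)$ to $(s, q^s)$ on the graph of the strictly convex function $y = q^x$ (strict convexity following from $q \ge 2 > 1$). Since secant slopes through a fixed point of the graph of a strictly convex function are monotonically increasing in the other endpoint, the result follows even for real arguments. I do not anticipate any serious obstacle, as the statement is elementary; I would present the geometric-series argument because it stays within the discrete, combinatorial flavor of the surrounding material and avoids invoking convexity machinery unnecessarily.
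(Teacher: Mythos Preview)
Your proof is correct. Both of your arguments---the discrete one via the geometric-series identity and the convexity/secant-slope alternative---are valid and complete.

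The paper takes a different route: it substitutes $a=r-1$, treats $g(a)=(q^a-1)/a$ as a function of a \emph{real} variable, and computes
\[
g'(a)=\frac{aq^a\log q-(q^a-1)}{a^2},
\]
then shows the numerator is positive by rewriting it as $q^a(a\log q-1)+1$ and checking that its minimum over $q\ge 2$, $a\ge 1$ occurs at $q=2,\,a=1$, where it equals $2\log 2-1>0$. Your primary argument instead establishes $f(r+1)>f(r)$ at integer points only, reducing to $rq^{r-1}>1+q+\cdots+q^{r-1}$ via the factorization $q^r-1=(q-1)\sum_{j=0}^{r-1}q^j$ and a term-by-term comparison. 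This is more elementary (no derivatives, no minimization over two parameters) and entirely sufficient for the application in the next lemma, which only needs the integer case. The paper's calculus argument yields the slightly stronger conclusion of monotonicity on the whole real interval, which your secant-slope alternative also recovers; but neither the paper nor the subsequent argument actually requires that extra generality.
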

\begin{proof}
Define $g(a)=\frac{q^{a}-1}{a}$; for $1\le a\le t-2$.  We have

\[
g'\left(a\right)=\frac{aq^{a}\log\left(q\right)-\left(q^{a}-1\right)}{a^{2}},
\]
and the result follows since $aq^{a}\log q-\left(q^{a}-1\right)=q^{a}\left(a\log q-1\right)+1\geq2\left(\log2-1\right)+1=2\log2-1>0$.\hfill
\end{proof}

\begin{lem}
The constant $\frac{2t-r}{\lg\left({q^{t}}/({q^{t}+q^{r-t}-2})\right)}$ indicated by (6)
is largest when $r=1$.\end{lem}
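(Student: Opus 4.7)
The plan is to show $h(r) \le h(1)$ for every integer $r \in \{2, 3, \ldots, t-1\}$, where $h(r) := (2t-r)/\lg\!\left(q^{t}/(q^{t}+q^{r-t}-2)\right)$; the claim is vacuous for $t \le 2$. Cross-multiplying, and using the split $(2t-r) = (2t-1) - (r-1)$ to separate the common $\lg(q^{t}+q^{1-t}-2)$ contribution, the target inequality is equivalent to
\[(2t-1)\,\lg\!\left(1+\frac{q^{1-t}(q^{r-1}-1)}{q^{t}+q^{1-t}-2}\right) \;\le\; (r-1)\,\lg\!\left(\frac{q^{t}}{q^{t}+q^{1-t}-2}\right).\]

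Next I would apply the elementary one-sided bounds $\lg(1+u) \le u/\ln 2$ on the LHS (with $u \ge 0$) and $-\lg(1-v) \ge v/\ln 2$ on the RHS (with $v = (2-q^{1-t})/q^{t} \in [0,1)$). Both bounds point in the direction required for a valid reduction: if the resulting polynomial inequality holds, the original one does too. After the $\ln 2$ factors cancel and denominators are cleared, the statement reduces to
\[\frac{q^{r-1}-1}{r-1} \;\le\; \frac{(2-q^{1-t})(q^{t}+q^{1-t}-2)}{(2t-1)\,q},\]
whose right-hand side depends only on $q$ and $t$. This is exactly where Lemma 2.7 enters: the left-hand side is the monotonically increasing quantity of that lemma, so its maximum over $r \in \{2,\ldots,t-1\}$ is attained at $r = t-1$. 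It therefore suffices to verify the purely algebraic inequality
\[(2t-1)\,q\,(q^{t-2}-1) \;\le\; (t-2)(2-q^{1-t})(q^{t}+q^{1-t}-2).\]

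Finally, I would expand this directly: the leading $q^{t}$ contribution on the right is $2(t-2)q^{t}$ whereas on the left it is $(2t-1)q^{t-1}$, and since $(2t-1) - 2(t-2)q < 0$ whenever $q \ge 2$ and $t \ge 4$, the dominant terms favor the right-hand side with wide margin; the small corner case $t = 3$ (for each $q \ge 2$) is handled by direct substitution. The main obstacle I anticipate is ensuring the first-order logarithm bounds used in step two are tight enough not to discard an essential term in some small-parameter regime; if they turned out to be insufficient, one could retain an extra term in the Taylor expansion or invoke the sharper estimate $-\ln(1-v) \ge v + v^{2}/2$, but numerical spot-checks at small $(t,q,r)$ indicate the first-order reduction already leaves comfortable slack.
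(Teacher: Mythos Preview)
Your proposal is correct and follows essentially the same route as the paper. Both arguments cross-multiply, rewrite the target as $(2t-1)\ln(1+u)\le -(r-1)\ln(1-v)$ with $u=\frac{q^{1-t}(q^{r-1}-1)}{q^{t}+q^{1-t}-2}$ and $v=\frac{2-q^{1-t}}{q^{t}}$, linearize via $\ln(1+x)\le x$ (the paper phrases this as $1+x\le e^{x}$), obtain the identical intermediate bound $\dfrac{q^{r-1}-1}{r-1}\le\dfrac{(2-q^{1-t})(q^{t}+q^{1-t}-2)}{(2t-1)q}$, invoke Lemma~2.7 to reduce to $r=t-1$, and then finish with a polynomial check plus a hand verification of small cases; the only cosmetic difference is that the paper simplifies the right-hand side to $2q^{t-1}-4$ before the final step, whereas you keep the full expression and argue by comparing leading terms.
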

\begin{proof}
We prove that $\frac{2t-r}{\lg\left({q^{t}}/({q^{t}+q^{r-t}-2})\right)}\le\frac{2t-1}{\lg\left({q^{t}}/({q^{t}+q^{1-t}-2})\right)}$
for integers $q\geq2$, $t\geq3$, and $r\in[2,t-1]$. This occurs if and only if
\[\left(\frac{q^{t}}{q^{t}+q^{1-t}-2}\right)^{2t-r}\le\left(\frac{q^{t}}{q^{t}+q^{r-t}-2}\right)^{2t-1},\]
which is equivalent to
\[\left(1+\frac{q^{1-t}\left(q^{r-1}-1\right)}{q^{t}+q^{1-t}-2}\right)^{2t-1}\left(1+q^{1-2t}-2q^{-t}\right)^{r-1}\le1.\]
Since $1+x\le e^{x}$, it suffices to show that:
\[\exp\lc\left(\frac{\left(2t-1\right)q^{1-t}\left(q^{r-1}-1\right)}{q^{t}+q^{1-t}-2}\right)+\left(r-1\right)\left(q^{1-2t}-2q^{-t}\right)\rc\le1,\]
or
\[\left(2t-1\right)\left(\frac{q^{r-1}-1}{r-1}\right)<2q^{t-1}-1+\frac{4}{q^{t}}-\frac{1}{q^{2t-1}}-\frac{4}{q}.
\]
Since $\frac{4}{q^{t}}>0$ and $1+\frac{1}{q^{2t-1}}+\frac{4}{q}\leq1+\frac{1}{32}+2\le4$,
it then suffices to show that
\[
\left(2t-1\right)\left(\frac{q^{r-1}-1}{r-1}\right)<2q^{t-1}-4,
\]
or, by Lemma 2.7,  that 
\begin{equation}\frac{2t-1}{t-2}(q^{t-2}-1)\le2(q^{t-1}-2).\end{equation}
Now (7) may be verified to be true for $t\ge4; q\ge2$ and for $t=3, q\ge3$.  The remaining case, $t=3; q=2$ can be checked by verifying the statement of Lemma 2.8 directly.
This completes the proof.\hfill\end{proof} 
By (5) and Lemma 2.8, 
\begin{eqnarray}
\e(Z)&\le&K_{t,q}n^{2t-1}\lr\frac{q^t+q^{1-t}-2}{q^t}\rr^k\nonumber\\
&\to&0\quad{\rm if} \enspace k\ge\frac{(2t-1)\lg n}{\lg\lr {q^t}/({q^t+q^{1-t}-2})\rr}(1+o(1));
\end{eqnarray}
the next lemma verifies the rather critical fact that this occurs for $k$'s that are smaller than the lower threshold we plan to exhibit.
\begin{lem}
\[\frac{(2t-1)}{\lg \lr{q^t}/({q^t+q^{1-t}-2)}\rr}<\frac{t}{\lg\lr{q^t}/({q^t-1})\rr}\]
\end{lem}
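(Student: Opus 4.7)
The plan is to reformulate the inequality in a ratio form that invites a convexity argument. Both logarithms are positive for $t\ge 2$ and $q\ge 2$, so cross-multiplying reduces the claim to
\[\frac{\lg\bigl(q^t/(q^t + q^{1-t} - 2)\bigr)}{\lg\bigl(q^t/(q^t-1)\bigr)} \;>\; \frac{2t-1}{t} \;=\; 2 - \frac{1}{t}.\]
The two numerators are values of the single function $h(y) := -\lg(1 - y/q^t) = \lg\bigl(q^t/(q^t - y)\bigr)$ on $[0,q^t)$, evaluated at $y = 2 - q^{1-t}$ and $y = 1$ respectively. One checks that $h(0)=0$ and that $h$ is strictly convex (its second derivative on $[0,q^t)$ equals $(1/\ln 2)\cdot q^{-2t}/(1 - y/q^t)^2 > 0$).

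I would then invoke the standard fact that for a strictly convex function $h$ on $[0,q^t)$ with $h(0)=0$, the slope function $y \mapsto h(y)/y$ is strictly increasing on $(0, q^t)$. Applying this with $y_1 = 1$ and $y_2 = 2 - q^{1-t}$, and using $q^{1-t} < 1$ (so $y_2 > y_1$) to activate strict monotonicity, one obtains
\[h(2 - q^{1-t}) \;>\; (2 - q^{1-t})\, h(1).\]
To finish, it then suffices to verify the elementary inequality $2 - q^{1-t} \ge 2 - 1/t$, i.e.\ $q^{t-1} \ge t$, which holds for $t\ge 3$, $q\ge 2$ since $q^{t-1}\ge 2^{t-1} \ge t$ (the last step by a routine induction). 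The strict inequality from convexity combined with this weak inequality yields the desired strict bound.

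The main potential obstacle is spotting the right reformulation: a direct attempt to clear the logs leads to the unwieldy inequality $(q^t-1)^{2t-1} > (q^{2t-1} - 2q^{t-1} + 1)^t$, which does not factor cleanly and does not admit an easy inductive treatment. Recognizing that the two denominators $q^t-1$ and $q^t + q^{1-t} - 2$ are values $q^t - y$ of a common concave-log expression at $y=1$ and $y = 2 - q^{1-t}$ is the crucial observation; after that, the argument collapses to the very clean slope-monotonicity fact together with the trivial comparison $q^{t-1}\ge t$.
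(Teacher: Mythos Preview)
Your proof is correct and takes a genuinely different route from the paper's. One small quibble: you state the final arithmetic check $q^{t-1}\ge t$ only for $t\ge 3$, but the paper needs the lemma for $t\ge 2$, and your argument goes through there as well (for $t=2$ one has $q^{t-1}=q\ge 2=t$, and the strict convexity step still applies since $y_2=2-q^{-1}\ge 3/2>1=y_1$).

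As for the comparison: the paper clears logarithms to the exponential inequality $\bigl(q^t/(q^t-1)\bigr)^{2t-1}<\bigl(q^t/(q^t+q^{1-t}-2)\bigr)^t$, takes $t$-th roots, and then pushes through using the analytic estimates $1-x\le e^{-x}$ and $e^{-x}\le 1/(1+x)$ to reduce to the polynomial inequality $q^t\bigl(t(q-1)+2\bigr)<q^{2t}+1$, which it finishes via $q^t\ge t(q-1)+2$. Your approach instead recognises that both logarithms are values of the single strictly convex function $h(y)=-\lg(1-y/q^t)$ with $h(0)=0$, at $y=1$ and $y=2-q^{1-t}$, and invokes the standard slope-monotonicity of $h(y)/y$ to reduce everything to the much simpler arithmetic fact $q^{t-1}\ge t$. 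Your argument is cleaner and more conceptual, avoiding the chain of exponential estimates; the paper's is more computational but entirely elementary. Both land on a comparison of the form ``$q$ to a power dominates something linear in $t$,'' but yours gets there with less work.
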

\begin{proof}
The claim is equivalent to 
\[\lr\frac{q^t}{q^t-1}\rr^{2t-1}<\lr\frac{q^t}{q^t+q^{1-t}-2}\rr^t,\]
or to \[\frac{q^{2t}}{q^{2t}-2q^t+1}\lr\frac{q^t-1}{q^t}\rr^{1/t}<\frac{q^t}{q^t+q^{1-t}-2}.\]  Using the inequalities $1-x\le e^{-x}$ and $e^{-x}\le1/(1+x)$, we see that
\[\lr\frac{q^t-1}{q^t}\rr^{1/t}\le\exp\{-1/(tq^t)\}\le\frac{tq^t}{1+tq^t},\] 
so that it would suffice to show, on simplification, that
\[q^t\lc t(q-1)+2\rc<q^{2t}+1,\]
which is true since $q^t\ge t(q-1)+2; q,t\ge 2$.\hfill
\end{proof}We are now ready to state the main result of this section.
\begin{thm} Consider a $k\times n$ array with entries that are uniformly and independently selected from $\{0,1,\ldots,q-1\}$.  Then if for large enough $A=A_{t,q}$
\[k\le \frac{t\lg n-\Omega(\lg\lg n)}{\lg \frac{q^t}{q^t-1}},\enspace { \it where}\ \Omega(\lg\lg n)\ge A\lg\lg n,\]
then the probability that the array is a $(t,q,n,k,1)$-covering array tends to zero as $n\to\infty$.
\end{thm}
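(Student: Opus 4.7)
The plan is to invoke the Talagrand-based estimate (4) at the largest $k$ allowed by the hypothesis, namely $k_{0}:=\lf(t\lg n-A\lg\lg n)/\lg(q^{t}/(q^{t}-1))\rf$. Because the covering property is monotone in the number of rows -- adjoining extra rows to a covering array keeps it a covering array -- a standard coupling argument reduces the claim at general $k\le k_{0}$ to the single case $k=k_{0}$: generating $k_{0}$ i.i.d.\ rows and retaining any $k$ of them gives an independent $k$-row random array whose covering event is contained in the $k_{0}$-row covering event, so $\p(X=0)$ can only decrease as $k$ decreases.

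At $k=k_{0}$, the definition of $k_{0}$ gives $(1-q^{-t})^{k_{0}}=n^{-t}(\lg n)^{A}$, hence
\[
\e(X)=\binom{n}{t}q^{t}(1-q^{-t})^{k_{0}}\sim\frac{q^{t}}{t!}(\lg n)^{A},
\]
a fixed poly-logarithmic quantity. To use (4) we also need the overlap mean $\e(Z)$ to be negligible: estimate (5), together with Lemmas 2.7, 2.8 and 2.9, shows that $\e(Z)\to 0$ polynomially in $n$ as soon as $k$ exceeds $(2t-1)\lg n/\lg(q^{t}/(q^{t}+q^{1-t}-2))(1+o(1))$, and Lemma 2.9 asserts this threshold is strictly below the leading constant $t/\lg(q^{t}/(q^{t}-1))$; hence $\e(Z)=o(1)$ at $k_{0}$. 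Finally $k_{0}t=\Theta(\lg n)$ gives $40\sqrt{k_{0}t\,\e(X)}=\Theta((\lg n)^{(A+1)/2})$, which is $o(\e(X))$ once $A>1$.

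Substituting into (4), the exponent becomes
\[
\frac{\e(X)-\e(Z)-40\sqrt{k_{0}t\,\e(X)}}{4k_{0}t}\sim\frac{\e(X)}{4k_{0}t}=\Theta((\lg n)^{A-1}),
\]
which diverges for any $A>1$, so $\p(X=0)\le 2\exp(-\Theta((\lg n)^{A-1}))\to 0$. The main technical obstacle is to ensure that $\e(X)$ simultaneously dominates both the pair-correlation overhead $\e(Z)$ and the Lipschitz fluctuation $\sqrt{kt\,\e(X)}$: the $A\lg\lg n$ gap subtracted from $t\lg n$ is designed precisely to supply the polylog cushion that makes this domination possible, while Lemma 2.9 is the critical input that keeps $\e(Z)$ vanishing in the critical window -- without it the second-moment overhead from (5) would obstruct the balance and the lower threshold would fail to match Theorem 2.1.
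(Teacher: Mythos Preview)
Your proof is correct and follows essentially the same approach as the paper: you invoke the Talagrand bound (4), use Lemmas 2.7--2.9 and estimate (8) to kill $\e(Z)$, verify that $\e(X)/(kt)\to\infty$ at the critical $k_{0}$ (which forces $A>1$, matching the paper's ``large enough $A$''), and appeal to monotonicity of $\p(X=0)$ in $k$. The only cosmetic difference is that you apply monotonicity first to reduce to a single $k=k_{0}$, whereas the paper establishes the conclusion on an interval of $k$-values and then extends downward; also note that your displayed equality $\e(X)=\binom{n}{t}q^{t}(1-q^{-t})^{k_{0}}$ is really only an asymptotic equivalence (the exact expectation involves an inclusion--exclusion), but since you immediately write $\sim$ this does not affect the argument.
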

\begin{proof}
Equation (4) reveals that the array will be $t$-covering with probability tending to zero whenever $\e(Z)\to0$ and $\e(X)/kt\to\infty$.  Since 
\[\e(X)\ge{n\choose t}\lr\frac{q^t-1}{q^t}\rr^k\ge\frac{n^t}{t!}\lr\frac{q^t-1}{q^t}\rr^k(1+o(1)),\]
we have that $\e(X)/kt\to\infty$ provided that
\[k\le\frac{t\lg n-\Omega(\lg\lg n)}{\lg\lr\frac{q^t}{q^t-1}\rr}\]
with $\Omega(\lg\lg n)\ge A\lg\lg n$.  Thus  (8) and Lemma 2.9 reveal that $\p(X=0)\to0$ if 
\[\frac{(2t-1)\lg n}{\lg \lr{q^t}/({q^t+q^{1-t}-2})\rr}(1+o(1))\le k\le\frac{t\lg n-\Omega(\lg\lg n)}{\lg\lr{q^t}/({q^t-1})\rr},\enspace \Omega(\lg\lg n)\ge A\lg\lg n. \]
The full conclusion of the theorem follows by monotonicity, in $k$, of $\p(X=0)$.\hfill\end{proof}
To seal the connection between covering arrays and shattering multisets, we restate Theorem 2.10 as follows:
\begin{thm}  Consider $k$ multisets ${\a}=\{A_1,\ldots,A_k\}$ of $[n]$ as follows:

(i) Each element of $[n]$ is represented in $A_i$ at most $q-1$ times; $q\ge 2$, $1\le i\le k$, and 

(ii) The system $\a$ of multisets is randomly generated by choosing the multiplicity of each element $j$ in multiset $A_i$ $(1\le j\le n; 1\le i\le k\}$ independently and uniformly from the set $\{0,1,\ldots,q-1\}$.

Then the collection $\a$ fails to shatter all multisets of $t$ elements, each element repeated $q-1$ times, with high probability if $k\le (t\lg n-\Omega(\lg\lg n))/{\lg \frac{q^t}{q^t-1}}$,

\noindent $\Omega(\lg\lg n)\ge A\lg\lg n$.
\end{thm}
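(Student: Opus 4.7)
The plan is to observe that Theorem 2.11 is essentially a restatement of Theorem 2.10 under a natural bijection, and so follows immediately from that result. First I would describe the correspondence: given a $k\times n$ array $\{a_{ij}\}$ with entries in $\{0,1,\ldots,q-1\}$, associate to it the collection of multisets $A_1,\ldots,A_k$ where element $j$ appears in $A_i$ with multiplicity $a_{ij}$. The uniform independent selection of the $a_{ij}$'s used in Theorem 2.10 is exactly the uniform independent selection of multiplicities stipulated in condition (ii) of Theorem 2.11, so the two probability spaces are identical.

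Next I would verify that the two shattering conditions coincide. Fix a $t$-element multiset $B=\{j_1^{q-1},j_2^{q-1},\ldots,j_t^{q-1}\}$ of the form described (each distinct element repeated $q-1$ times). Because $a_{ij_\ell}\le q-1$, the multiset intersection $B\cap A_i$ contains $j_\ell$ with multiplicity $\min(q-1,a_{ij_\ell})=a_{ij_\ell}$; thus $B\cap A_i$ is faithfully encoded by the $q$-ary word $(a_{ij_1},\ldots,a_{ij_t})$. Consequently $|\{B\cap A_i:1\le i\le k\}|=q^t$ if and only if the restriction of the array to columns $j_1,\ldots,j_t$ realizes all $q^t$ words of length $t$. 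In other words, ${\a}$ shatters every such $t$-multiset precisely when the associated array is $(t,q,n,k,1)$-covering.

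With this translation in hand, the conclusion of Theorem 2.11 is immediate: under the stated bound on $k$, Theorem 2.10 says the array fails to be $t$-covering with high probability, and that event coincides with ${\a}$ failing to shatter at least one multiset of the specified form. I expect no substantive obstacle, since the argument is just a notational unpacking of the equivalence already highlighted in Section 1 (``${\cal F}$ is a $(t,q,n,k,1)$-covering array if and only if ${\cal VC(F)}\ge t+1$''). The only care required is to spell out why the multiset intersection of a row with $B$ genuinely records all $t$ coordinates of the row in the selected columns, which is where the hypothesis that each element of $B$ has multiplicity exactly $q-1$ (the maximum possible) is used in an essential way.
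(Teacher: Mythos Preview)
Your proposal is correct and matches the paper's approach exactly: the paper presents Theorem 2.11 explicitly as a restatement of Theorem 2.10 (``To seal the connection between covering arrays and shattering multisets, we restate Theorem 2.10 as follows'') and gives no separate proof. Your careful unpacking of the bijection between arrays and multiset systems, and in particular the observation that the multiplicity $q-1$ of each element of $B$ is what makes $B\cap A_i$ record the full word $(a_{ij_1},\ldots,a_{ij_t})$, is exactly the content implicit in the paper's one-line transition.
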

Together with Theorem 2.1, Theorems 2.10 and 2.11 show that the gap between the lower and upper thresholds is rather small; actually this gap arises as an artifact of the Talagrand inequality, and is in fact artificial.  Finally, observe that we have actually uncovered a threshold for the VC dimension of random multiset arrays.  For example, if $q=2$ then sets of size 3 are fully shattered with high probability (w.h.p.) at the level $3\lg n/\lg(8/7)\approx15.57\lg n$.  Thus the VC dimension is 4 or more.  But few sets of size 4 are shattered at this level; they are all shattered w.h.p. when the number of rows are of magnitude $4\lg n/\lg (16/15)\approx42.96\lg n$.  In between these levels, the VC dimension of the set system is thus {\it equal to} 4 w.h.p.

Other proofs of Theorems 2.10, e.g., using the second moment method (see \cite{as}), would have worked just as well as the proof using Talagrand's inequality.   However, the hard correlation analysis of the last few pages would be necessary no matter what method was used.  Moreover, we are forced to use the second moment method in Section 3, when we lose the 1-Lipschitz property.
\section{Shattering Permutations}

For permutations, we use a model analogous to the one used for words
in the previous section. Let $\s$ be a randomly generated set of $k$ permutations $\pi_1,\ldots,\pi_k\in S_n$, with each chosen independently with probability $1/n!$  As before, we can represent $\s$ as an array,
and we will continue to use \textit{rows} to refer to the
elements of $\s$ and \textit{columns} to refer to the positions within
each element of $\s$.

Shattering permutations is conceptually similar to shattering words.
Let $i_{1},i_{2},i_{3}$ be any 3 elements of $\left[n\right]$ and
let $S^{*}$ be the set consisting of the $k$ triples formed
by intersecting the $i_{1}$th, $i_{2}$th, and $i_{3}$th columns with
the $k$ rows of $S$. 
Then, $S$ shatters the triple $(i_1, i_2, i_3)$ (or the {\it positions}  $(i_1, i_2, i_3)$) if $\rho\in S^{*}$ up to order-isomorphism for each $\rho\in S_3$.

In this section, we show that the threshold function for the property
{}``shatters all triples'' under our model is $k_{0}\left(n\right)=\frac{3}{\lg\left(\frac{6}{5}\right)}\lg\left(n\right)$
modulo a small gap. The upper threshold  is obtained via the first moment method as in Section 2.     
We cannot, however, use the same approach as before to prove the lower threshold, as the conditions of Talagrand's inequality are no longer valid, for the following reason:  We generate the $k\times n$ array using $nk$ independent random variables $\{X_{ij}\}_{1\le i\le k; 1\le j\le n}$ each uniformly distributed on $[0,1]$, with the order statistics of the $k$ groups of $n$ consecutive random variables determining the permutation in the corresponding row.  For example for $n=4$ and $k=3$, the sample outcome $X_{13}<X_{12}<X_{14}<X_{11}$;  $X_{21}<X_{24}<X_{23}<X_{22}$; $X_{31}<X_{33}<X_{34}<X_{32}$ produces the permutation array
\begin{eqnarray*}
3\quad2\quad4\quad1\\
1\quad4\quad3\quad2\\
1\quad3\quad4\quad2
\end{eqnarray*}
If $X_{ij}^*\ne X_{ij}$  for just one pair of indices, that could potentially change the permutation in that row drastically, and we can create examples where the maximum number of sets of disjoint unshattered $t$-ples changes from its original value to 0, or go up from a small number to a large number.  

The method of choice for the lower threshold will thus be elementary; we use the second moment method (Chebychev's inequality), which implies that for a non-negative random variable, 
\[\p(X=0)\le\frac{\va(X)}{\e^2(X)}.\]Interestingly, the correlation analysis would have been equally complicated had Talagrand's inequality been applicable.  However, the analysis becomes intractable for higher values of $t$, which is the size of the tuple we wish to shatter, and we thus restrict to $t=3$ in this paper except in the next elementary result, proved originally in \cite{spencer}.
\begin{thm}
Let $n\ge t\ge 2$, and $k\ge\frac{t}{\lg\left(\frac{t!}{t!-1}\right)}\lg\left(n\right)(1+o(1))$.  Consider a rectangular array ${\cal S}$ of $k$ random permutations.  Then all $t$-ples are shattered almost surely by $\s$.\end{thm}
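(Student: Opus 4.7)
The plan is to imitate the first moment argument used for Theorem 2.1, with the alphabet of $q^t$ $t$-letter words replaced by the set of $t!$ order-isomorphism classes of $t$-tuples. Let $X$ denote the number of $t$-subsets $\{i_1 < \cdots < i_t\} \subseteq [n]$ such that the corresponding columns fail to be shattered by $\s$, and write $X$ as a sum of indicators over column subsets and patterns $\rho \in S_t$.

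The key observation, and the one step I want to isolate, is the following symmetry fact: if $\pi \in S_n$ is chosen uniformly at random and we restrict $\pi$ to any fixed set of $t$ positions, the resulting $t$-tuple is order-isomorphic to any given $\rho \in S_t$ with probability exactly $1/t!$. This is immediate from the fact that the $t!$ relative orderings on those positions partition $S_n$ into $t!$ equal-sized classes (for each fixed ordering on the chosen positions, the number of ways to extend to a full permutation does not depend on the ordering). Consequently, for independent rows, the probability that a given pattern $\rho$ is missing from the restricted array on a fixed set of $t$ columns is exactly $\bigl(1-\frac{1}{t!}\bigr)^k = \bigl(\frac{t!-1}{t!}\bigr)^k$.

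Applying Markov's inequality and linearity of expectation, I would then bound
\[
\p(X \ge 1) \;\le\; \e(X) \;\le\; {n \choose t} \cdot t! \cdot \left(\frac{t!-1}{t!}\right)^k \;\le\; n^{t}\left(\frac{t!-1}{t!}\right)^k,
\]
which tends to $0$ precisely when
\[
k \;\ge\; \frac{t\lg n + \omega_n(1)}{\lg\bigl(t!/(t!-1)\bigr)} \;=\; \frac{t \lg n}{\lg\bigl(t!/(t!-1)\bigr)}(1+o(1)),
\]
matching the hypothesis. This establishes the theorem.

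No real obstacle is expected: once the $1/t!$ symmetry is stated, the computation is identical in structure to the one carried out in Theorem 2.1, and neither Talagrand's inequality nor the delicate correlation analysis of Section 2 is required for this upper threshold. The only subtlety is making sure the symmetry claim is justified cleanly in one line, since a reader might wonder whether independence across different $t$-column sets is needed (it is not, because we are only using a union bound).
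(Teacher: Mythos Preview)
Your proposal is correct and follows essentially the same first-moment argument as the paper: bound $\p(X\ge 1)\le \e(X)\le {n\choose t}\,t!\,\bigl(\tfrac{t!-1}{t!}\bigr)^k$ and read off the threshold. The only difference is that you spell out the $1/t!$ symmetry claim for the restriction of a uniform permutation to $t$ positions, which the paper uses implicitly without comment; otherwise the computations are identical.
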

\begin{proof}
Let $X$ be the number of  unshattered
triples. By Markov's Inequality and linearity of expectation, we have:
\[
\p\left(X\geq1\right)\leq\e\left(X\right)\leq{n \choose t}t!\left(\frac{t!-1}{t!}\right)^{k}\leq\frac{n^{t}}{t!}t!\left(\frac{t!-1}{t!}\right)^{k}\to0
\]
provided that $k\ge\frac{t\lg n+\omega_n(1)}{\lg\lr\frac{t!}{t!-1}\rr},$  with $\omega_n(1)\to\infty$ being arbitrary.  This proves the result.\hfill\end{proof}

Now, for the lower threshold. Let $X$ be as before.  We need to compute $\va(X)$ and demonstrate when it is  $o(\e^2(X))$.  Let the indicator random variable $I_\Gamma$ equal $1$ if the $\Gamma$th set of three columns 
is missing at least one 3-permutation, with $I_\Gamma=0$ otherwise.  Set $N={n\choose 3}$.  We have
\begin{eqnarray*}
\va(X)&=&\va\lr\sum_{\Gamma=1}^NI_{\Gamma}\rr\nonumber\\
&=&\sum_{\Gamma=1}^N[\e(I_\Gamma)-\e^2(I_\Gamma)]+2\sum_{\Gamma<\Delta}[\e(I_\Gamma I_\Delta)-\e(I_\D)\e(I_\G)]\nonumber\\
&\le&\e(X)+\sum_{\Gamma,\Delta\in OP}[\p(I_\Gamma I_\Delta=1)-\p(I_\D=1)\p(I_\G=1)],
\end{eqnarray*}
where $\Gamma$ and $\Delta$ range over the set $OP$ of distinct non-disjoint sets of 3 columns.   It follows that
\begin{equation}
\frac{\va(X)}{\e^2(X)}\le\frac{1}{\e(X)}+\frac{\sum_{\Gamma,\Delta\in OP}\p(I_\Gamma I_\Delta=1)}{\e^2(X)}.
\end{equation}Unlike the case of words, the correlation analysis is more complex.  First, given an overlap size we can no longer consider just two cases, since, e.g., two patterns $ijk$ and $abc$ may correspond to overlapping last and first columns in $\Gamma,\Delta$ respectively, but $k$ may equal 3 and $a$ might be 1.  Second, we can no longer assume without loss of generality, as we did with words, that the overlap occurred in the last $r$ columns of $\Gamma$, and that the rest of $\Delta$ was entirely to the right of that overlap.   In other words, correlations depend not just on the magnitude of the overlap, but its nature as well.  With this in mind, 
let $A_{m}$ be the event that exactly $m$ 3-permutations are missing
from $\Gamma, 1\le m\le5$, let $B_{abc}\subseteq A_1$ be the event that the 3-permutation
$abc$ is the only permutation missing from $\Gamma$, and let $C_{ijk}$
be the event that $ijk$ is missing from $\Delta$. Then, we have:
\begin{eqnarray}
\p\left(I_{\Gamma}I_{\Delta}=1\right)&=& \p\left(I_{\Gamma}=1\right)\p\left(I_{\Delta}=1|I_{\Gamma}=1\right)\nonumber\\
 & =&\p\left(I_{\Gamma}=1\right)\p\left(I_{\Delta}=1|A_{1}\cup A_{2}\cup\dotsb\cup A_{5}\right)\nonumber\\
 & =&\p\left(I_{\Gamma}=1\right)\left[\frac{\p\left(I_{\Delta}=1\cap A_{1}\right)+\dotsb+\p\left(I_{\Delta}=1\cap A_{5}\right)}{\p\left(A_{1}\cup A_{2}\cup\dotsc\cup A_{5})\right)}\right]\nonumber\\
 & \leq&\p\left(I_{\Gamma}=1\right)\cdot\nonumber\\
&\quad&\left[\p\left(I_{\Delta}=1|A_{1}\right)+\left(\frac{\p\left(A_{2}\right)+\dotsb+\p\left(A_{5}\right)}{\p\left(A_{1}\right)+\p(A_2)+\dotsb+\p(A_5)}\right)\right]\nonumber\\
 & =&\p\left(I_{\Gamma}=1\right)\ls\p\left(I_{\Delta}=1|A_{1}\right)+\frac{15\cdot(4/6)^k}{6\cdot(5/6)^k-15\cdot(4/6)^k}\rs\nonumber\\
 & =&\p\left(I_{\Gamma}=1\right)
\ls\p\left(I_{\Delta}=1|\bigcup_{ijk\in S_3}B_{ijk}\right)+O((4/5)^k)\rs\nonumber\\
 & =&\p\left(I_{\Gamma}=1\right)\left[\frac{\sum_{ijk\in S_3}\p\left(I_{\Delta}=1\cap B_{ijk}\right)}{\sum_{ijk\in S_3}\p\left(B_{ijk}\right)}+O((4/5)^k)\right]\nonumber\\
 & \leq&\frac{1}{6}\p\left(I_{\Gamma}=1\right)\left(\p\left(I_{\Delta}=1|B_{123}\right)+\ldots+\p\left(I_{\Delta}=1|B_{321}\right)\right)\nonumber\\
&\quad&+\p\left(I_{\Gamma}=1\right)O((4/5)^k))\nonumber\\
 & \leq&
\left(\frac{5}{6}\right)^{k}\left(\p\left(C_{123}|B_{123}\right)+\p(C_{132}|B_{123})+\ldots+\p\left(C_{321}|B_{321}\right)\right)\nonumber\\
&\quad&+O((2/3)^k).
\end{eqnarray}
Accurate estimation of the 36 quantities $\p\left(C_{ijk}|B_{abc}\right)$ will thus be critical.  

Let $\Gamma$ and $\Delta$ be distinct non-disjoint sets of 3 columns.  Let $D_{abc}$ and $F_{ijk}$ be the events that $abc$ appears in a fixed row of $\Gamma$ and $ijk$ appears in the same row of $\Delta$.  Now the probability distribution of the components of $\Gamma$ conditional on $B_{abc}$ can be obtained, as in Section 2, by randomly selecting five rows, placing one pattern other than $abc$ in these rows, and randomly choosing a pattern other than $abc$ to appear in the other rows.  For simplicity, however, we will assume that {\it each} of the $k$ rows in $\Gamma$ is equally likely to be chosen to be one of the non-$abc$ patterns; it can be shown that using this slightly incorrect\footnote{since this yields a non-zero probability of there being four or fewer patterns in $\Gamma$; a completely correct proof would be as in Section 2. } conditional distribution leads to no change in our final conclusion.  
We have, with $A^C$ denoting the complement of $A$, 
\begin{eqnarray}
\p\left(C_{ijk}|B_{abc}\right)&=&\lr\frac{\sum_{uvw\ne abc}\p(D_{uvw}\cap F_{ijk}^C)}{\sum_{uvw\ne abc}\p(D_{uvw})}\rr^k\nonumber\\
&=&\lr{\frac{6}{5}\lr\p(F_{ijk}^C)-\p(D_{abc}\cap F_{ijk}^C)\rr}\rr^k.
\end{eqnarray}
\begin{lem} Assume that $\vert\Gamma\cap\Delta\vert=1$ and let $\gamma,\delta$ refer to the index of the overlapping position in the two sets of columns.  Then we have the probabilities in Table 1.
\begin{table}
\begin{centering}
\begin{tabular}{|c|c|}
\hline 
 $(\gamma,\delta)$&$\p(D_{uvw}\cap F_{ijk}^C\vert B_{abc})$\tabularnewline
\hline 
(1,1)&$\frac{14}{100}$\tabularnewline
(1,2)&$\frac{17}{100}$\tabularnewline
(1,3)&$\frac{19}{100}$\tabularnewline
(2,2)&$\frac{16}{100}$\tabularnewline
(2,3)&$\frac{17}{100}$\tabularnewline
(3,3)&$\frac{14}{100}$\tabularnewline
\hline 
\end{tabular}
\par
\end{centering}
\caption{$\p(D_{uvw}\cap F_{ijk}^C\vert B_{abc})$ for Overlap 1}
\end{table}
\end{lem}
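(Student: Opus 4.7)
The plan is to express each table entry as a count over the $5!=120$ equally likely orderings of five i.i.d.\ uniform random variables. When $\vert\Gamma\cap\Delta\vert=1$, only five columns matter in any given row: let $V_0$ be the variable at the shared column, let $V_1,V_2$ be the variables at the other two columns of $\Gamma$ (in column order), and let $V_3,V_4$ be those of $\Delta$. The relative ranking of $(V_0,\ldots,V_4)$ is uniform on $S_5$, and $D_{uvw}$ and $F_{ijk}$ translate into explicit order constraints whose combined satisfaction set depends only on the pair $(\gamma,\delta)$ and the patterns.

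Under the simplified conditional distribution adopted just before equation (11), each row independently shows a uniformly chosen pattern from $S_3\setminus\{abc\}$ in $\Gamma$, and conditionally on $D_{uvw}$ the $\Delta$-pattern inherits its unconditional law; hence
\[
\p\bigl(D_{uvw}\cap F_{ijk}^{C}\,\vert\, B_{abc}\bigr)
\;=\;\tfrac{1}{5}\,\p(F_{ijk}^{C}\vert D_{uvw})
\;=\;\tfrac{1}{5}-\tfrac{6}{5}\,\p(D_{uvw}\cap F_{ijk}).
\]
It therefore suffices to count, for each of the six overlap configurations $(\gamma,\delta)\in\{(1,1),(1,2),(1,3),(2,2),(2,3),(3,3)\}$, the number of orderings in $S_5$ realizing the canonical joint event attached to the representative pair $uvw=ijk=123$ that the table tabulates.

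For $(\gamma,\delta)=(1,1)$ the joint event forces $V_0$ to be the global minimum, $V_1<V_2$, and $V_3<V_4$ among the remaining four variables, giving $4!/(2!\cdot 2!)=6$ orderings, so $\p(D_{123}\cap F_{123})=1/20$ and the table entry is $\tfrac{1}{5}-\tfrac{6}{5}\cdot\tfrac{1}{20}=\tfrac{14}{100}$. At the other extreme, $(\gamma,\delta)=(1,3)$ forces the single ordering $V_3<V_4<V_0<V_1<V_2$, giving $1/120$ and entry $\tfrac{19}{100}$. The remaining four configurations are handled in the same way: identify the rank that $V_0$ must occupy inside each triple under the two patterns, then count orderings of the remaining variables subject to the two within-triple ``$<$''-constraints (for example, $(2,2)$ forces $V_1,V_3<V_0<V_2,V_4$, giving $2\cdot 2=4$ orderings and entry $\tfrac{16}{100}$).

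The only genuine difficulty is the bookkeeping in the asymmetric cases $(1,2)$, $(1,3)$, and $(2,3)$, where $V_0$'s rank in $\Gamma$ differs from its rank in $\Delta$ and one cannot appeal to a reflection symmetry across the overlap. Apart from this, the argument is a mechanical enumeration, and the identity displayed above delivers each table entry from the corresponding count.
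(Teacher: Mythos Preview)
Your proposal is correct and follows essentially the same route as the paper: both reduce the table entries to counting, among the $5!$ equally likely orderings of the five relevant values, how many realize $D_{uvw}\cap F_{ijk}$, and then pass to the complement. The paper packages this count in the closed form $\binom{g+d-2}{g-1}\binom{6-g-d}{3-g}/120$ (choose which of the $g+d-2$ values below the overlap go to $\Gamma$, and similarly above), whereas you enumerate the six configurations one by one; for six cases either is fine, though the binomial expression makes the pattern transparent and would scale if one ever needed larger $t$.

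One small wording issue: the sentence ``conditionally on $D_{uvw}$ the $\Delta$-pattern inherits its unconditional law'' is not correct as stated (knowing the $\Gamma$-pattern does bias the $\Delta$-pattern through the shared coordinate). The identity you actually need, and which does hold, is $\p(F_{ijk}^{C}\mid D_{uvw},B_{abc})=\p(F_{ijk}^{C}\mid D_{uvw})$, because once $D_{uvw}$ is known the extra information in $B_{abc}$ concerns only $\Gamma$ and is redundant. With that correction your displayed formula $\tfrac{1}{5}-\tfrac{6}{5}\,\p(D_{uvw}\cap F_{ijk})$ is justified, and in particular it explains why the numerators $20-\text{count}$ land over $100$ (conditional on $B_{abc}$) rather than over $120$ (unconditional), matching the table as printed.
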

\begin{proof}  It is easier, for any $ijk$ and $uvw$, to calculate the probabilities $\p(D_{uvw}\cap F_{ijk})$ instead of $\p(D_{uvw}\cap F_{ijk}^C)$.  As an example, suppose that the five positions spanned by $(\G,\D)$ are as follows:
\[{{\Gamma}\atop{\Delta}}\quad{{3}\atop{}}\quad{{2}\atop{1}}\quad{{1}\atop{}}\quad{{}\atop{3}}\quad{{}\atop{2}}\]
We call this the $\gamma=2, \delta=1$ case, since the overlapping element in $\Gamma$ is $\gamma=2$ and the overlapping element in $\Delta$ is $\delta=1$.  Now, observe that
\[\p(D_{uvw}\cap F_{ijk}\cap\gamma=g\cap\delta=d)={{g+d-2}\choose{g-1}}\cdot{{6-g-d}\choose{3-g}}/120,\]
since the permutation of length 5 has $g+d-2$ elements smaller than the overlap element, and we need to pick $g-1$ of them that will be in $\Gamma$.  A similar analysis for elements larger than the overlap element provides the other term.  This yields the entries in Table 1; note that the relative positions of the non-overlapping indices amongst the five positions are irrelevant.  \hfill\end{proof}

Returning to the variance in (9), we first address the contribution of the $O(2/3)^k$ term from (10).  Summing this quantity over all choices of $\Gamma,\Delta\in OP$, we see that the net contribution, to $\sum\p(I_\G I_\D=1)$ of terms corresponding to two or more permutations being absent in $\Gamma$ is of order
\[n^5\lr\frac{2}{3}\rr^k\to0\]
if $k\ge(5+o(1))\lg n/\lg (1.5)$, or if $k\ge 8.55\lg n$.

We now turn to the 36 terms in the first part of the last line in (10), each term of which may be bounded using (11) and the minimum value 14/100 from Table 1 as follows:
\begin{eqnarray*}
\p\left(C_{ijk}|B_{abc}\right)&=&
\lr{\frac{6}{5}\lr\p(F_{ijk}^C)-\p(D_{abc}\cap F_{ijk}^C)\rr}\rr^k\\
&\le&\ls\frac{6}{5}\lr\frac{5}{6}-\frac{14}{120}\rr\rs^k\\&=&(0.86)^k,
\end{eqnarray*}  
and thus the contribution of the single overlap case to the correlation in (9) is $O(n^5(5/6\cdot(0.86))^k$, which tends to zero if 
$k\ge10.41\lg n.$

Consider next the two-overlap case.  Equation (11) remains unchanged, but the conditional probabilities $\p(D_{uvw}\cap F_{ijk}^C\vert B_{abc})$ have a denominator of 20.  Given the four entries in $\Gamma\cup\Delta$ in the two-overlap case, the components of the pattern $uvw$ may be chosen in 4 ways, and it remains to calculate how many of these are also consistent with $F_{ijk}$.  There are three cases.  If the components of the two columns in the overlap are {\it identical} as, e.g.,  in 
\[{{3}\atop{}}\quad{{2}\atop{2}}\quad{{1}\atop{1}}\quad{{}\atop{3}}\]
the entries in the four positions may appear in two forms, in this case 3214 or 4213.  If the components of the two columns are {\it consistent} as, e.g., in
\[{{2}\atop{}}\quad{{1}\atop{2}}\quad{{3}\atop{3}}\quad{{}\atop{1}}\]
then there is only one possible arrangement, in this case 3241.  Finally, if the components are {\it inconsistent}, consisting of one monotone increasing and one monotone decreasing pattern, e.g.,
\[{{1}\atop{3}}\quad{{3}\atop{}}\quad{{}\atop{2}}\quad{{2}\atop{1}}\]
then there is clearly no arrangement.  We thus have $\p(D_{uvw}\cap F_{ijk})$ equalling 2/24, 1/24, or 0 in these three cases, or $\p(D_{uvw}\cap F_{ijk}^C)$=2/24, 3/24, or 4/24 respectively.  (10) and (11) thus yield for some constant $C$,
\[
\p(I_\Gamma I_\Delta=1)\le C\cdot\lr\frac{5}{6}\rr^k\cdot\lr\frac{6}{5}\lc\frac{5}{6}-\frac{2}{24}\rc\rr^k=C\cdot(0.75)^k
\]
and thus the contribution of the double overlap case to $\e(Z)$ is of magnitude $n^4(0.75)^k$, which tends to zero if $k\ge 9.64\lg n$.  We are now ready to prove the main result of this section:
\begin{thm}  Suppose we choose $k$ permutations randomly, uniformly, and with replacement from $S_n$, then the probability that they shatter all 3-permutations in any three positions $i_1<i_2<i_3$ tends to zero as $n\to\infty$ provided that $k\le \frac{3\lg n-\omega_n(1)}{\lg 1.2}$ where $\omega_n(1)\to0$ is arbitrary.
\end{thm}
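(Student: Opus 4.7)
The plan is to apply Chebychev's inequality to $X$, the number of unshattered triples, through the decomposition~(9):
\[\frac{\va(X)}{\e^2(X)}\le\frac{1}{\e(X)}+\frac{\sum_{\Gamma,\Delta\in OP}\p(I_\Gamma I_\Delta=1)}{\e^2(X)},\]
so that $\p(X=0)\le\va(X)/\e^2(X)$, and it suffices to show both terms on the right tend to $0$.

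For the first term I would lower bound $\e(X)\ge\binom{n}{3}(5/6)^k\ge(n^3/6)(5/6)^k(1+o(1))$; under the hypothesis $k\le(3\lg n-\omega_n(1))/\lg(6/5)$ with $\omega_n(1)\to\infty$, this forces $\e(X)\to\infty$, and in particular yields the denominator estimate $\e^2(X)\gtrsim n^6(5/6)^{2k}/36$. For the second term I would invoke the three correlation bounds already derived in the discussion preceding the theorem: the $O(n^5(2/3)^k)$ contribution from the higher-order tail of~(10), the $O(n^5(5/6\cdot 0.86)^k)$ contribution from the single-overlap pairs via~(11) and Table~1, and the $O(n^4(0.75)^k)$ contribution from the double-overlap pairs via the identical/consistent/inconsistent trichotomy. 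Each tends to $0$ absolutely once $k$ is at least about $10.41\lg n$ (the binding constraint is the single-overlap term), and since the target upper limit $\approx 11.41\lg n$ set by $\lg(6/5)$ is strictly larger, there is a nonempty band at the top of the stated range in which both terms of the Chebychev bound vanish simultaneously.

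To extend the conclusion to all smaller $k$ in the stated range, I would invoke monotonicity: the event $\{X=0\}$ is monotone increasing in $k$, because adding a row can only enlarge the set of $3$-permutations observed at any fixed triple of columns, so $\p(X=0)$ is nondecreasing in $k$ and the vanishing of $\p(X=0)$ at the top of the range propagates downward automatically. The main obstacle in the overall argument is not the Chebychev skeleton itself but the correlation bookkeeping performed in the paragraphs before the theorem: one must verify that the coarse minimum-entry bound from Table~1, combined with the $(5/6)^k$ factor coming from $\p(I_\Gamma=1)$, is strong enough to beat the $O(n^5)$ pair count in the single-overlap regime (and analogously in the double-overlap regime). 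The subtlety, absent in Section~2, is that the single-overlap correlation depends not only on the overlap size but on the \emph{positions} $(\gamma,\delta)$ of the shared column within each triple, which is exactly what the six entries of Table~1 encode.
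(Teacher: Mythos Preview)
Your proposal is correct and follows essentially the same route as the paper's proof: apply Chebychev via~(9), lower-bound $\e(X)$ to handle the first term, invoke the three pre-derived correlation estimates (tail $O(n^5(2/3)^k)$, single-overlap $O(n^5(5/6\cdot 0.86)^k)$, double-overlap $O(n^4(0.75)^k)$) to handle the second, observe that all of them vanish once $k\gtrsim 10.41\lg n$, and then extend downward by monotonicity of $\p(X=0)$ in $k$. The only cosmetic difference is that the paper lower-bounds $\e(X)$ via the Bonferroni estimate $\binom{n}{3}\bigl(6(5/6)^k-15(4/6)^k\bigr)$ rather than your simpler $\binom{n}{3}(5/6)^k$; both give $n^3(5/6)^k(1+o(1))$ up to constants, so this is immaterial.
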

\begin{proof}
From the above analysis, we see that  both components of the right side of (9) tend to zero if $\e(X)\to\infty$ and $k\ge 10.41\lg n$ (actually a weaker condition than this can be derived by using a more careful calculation).  
Now 
\[\e(X)\ge{n\choose 3}\lr6\lr\frac{5}{6}\rr^k-15\lr\frac{4}{6}\rr^k\rr\ge n^3\lr\frac{5}{6}\rr^k(1+o(1))\to\infty\]
if $k$ is as stated.  The full conclusion of the theorem, namely that $\p(X=0)\to0$ for $k\le 10.41\lg n$, follows by monotonicity.
\hfill\end{proof}

\section{Acknowledgments} The research of all three authors was supported by NSF Grant 1004624.  We thank the two anonymous referees whose incisive comments have greatly improved the paper, both in form and in content.

\end{document}